\newtheorem{theorem}{Theorem}
\newtheorem{corollary}[theorem]{Corollary}
\newtheorem{lemma}[theorem]{Lemma}
\let\leq\leqslant
\let\geq\geqslant
\let\setminus\smallsetminus
\let\epsi\varepsilon
\newcommand{\brac}[1]{{\left(#1\right)}}
\newcommand{\set}[1]{\left\{#1\right\}}
\newcommand{\norm}[1]{{\left|#1\right|}}
\newcommand{\ceil}[1]{{\left\lceil #1 \right\rceil}}
\newcommand{\Oh}[1]{O\brac{#1}}
\newcommand{\oh}[1]{o\brac{#1}}
\newcommand{\Om}[1]{\Omega\brac{#1}}
\newcommand{\Ot}[1]{\Theta\brac{#1}}
\newcommand{\Nat}{\mathbb{N}}
\newcommand{\dir}[1]{\vec{#1}}
\newcommand{\kG}{\mathbb{G}}
\newcommand{\kH}{\mathbb{H}}
\newcommand\etc{etc\@ifnextchar.{}{.\@}}
\newcommand\etal{et~al\@ifnextchar.{}{.\@}}
\begin{document}

\ifthenelse{\boolean{submission}}
{\begin{frontmatter}}
{}

\ifthenelse{\boolean{submission}}
{\title{Universal targets for homomorphisms\break of edge-colored graphs}}
{\title{Universal targets for homomorphisms\break of edge-colored graphs}
\thanks{Research of the first author was supported by MNiSW grant DI2013 000443.
Research of the second author was supported by NCN grant UMO-2011/03/D/ST6/01370.}}

\ifthenelse{\boolean{submission}}
{\author{Grzegorz Gu\'{s}piel\fnref{t1}}
\ead{guspiel@tcs.uj.edu.pl}
\fntext[t1]{Partially supported by MNiSW grant DI2013 000443.}}
{\author[G.~Gu\'{s}piel]{Grzegorz Gu\'{s}piel}}

\ifthenelse{\boolean{submission}}
{\author{Grzegorz Gutowski\fnref{t2}}
\ead{gutowski@tcs.uj.edu.pl}
\fntext[t2]{Partially supported by NCN grant UMO-2011/03/D/ST6/01370.}}
{\author[G.~Gutowski]{Grzegorz Gutowski}}

\address{Theoretical Computer Science Department, Faculty of Mathematics and Computer Science, Jagiellonian University, Krak\'{o}w, Poland}
\ifthenelse{\boolean{submission}}
{}
{\email{\{guspiel,gutowski\}@tcs.uj.edu.pl}}

\begin{abstract}

A $k$-edge-colored graph is a finite, simple graph with edges labeled by numbers $1,\ldots,k$.
A function from the vertex set of one $k$\nobreakdash-edge-colored graph to another is a homomorphism if the endpoints of any edge are mapped to two different vertices connected by an edge of the same color.
Given a class $\mathcal{F}$ of graphs, a~$k$-edge-colored graph $\kH$ (not necessarily with the underlying graph in~$\mathcal{F}$) is $k$\nobreakdash-universal for $\mathcal{F}$ when any $k$-edge-colored graph with the underlying graph in $\mathcal{F}$ admits a homomorphism to $\kH$.
We characterize graph classes that admit $k$\nobreakdash-universal graphs.
For such classes, we establish asymptotically almost tight bounds on the size of the smallest universal graph.

For a nonempty graph $G$, the density of $G$ is the maximum ratio of the number of edges to the number of vertices ranging over all nonempty subgraphs of $G$.
For a nonempty class $\mathcal{F}$ of graphs, $D(\mathcal{F})$ denotes the density of $\mathcal{F}$, that is the supremum of densities of graphs in $\mathcal{F}$.

The main results are the following.
The class $\mathcal{F}$ admits $k$\nobreakdash-universal graphs for $k\geq2$ if and only if there is an absolute constant that bounds the acyclic chromatic number of any graph in $\mathcal{F}$.
For any such class, there exists a constant $c$, such that for any $k \geq 2$, the size of the smallest $k$\nobreakdash-universal graph is between $k^{D(\mathcal{F})}$ and $ck^\ceil{D(\mathcal{F})}$.

A connection between the acyclic coloring and the existence of universal graphs was first observed by Alon and Marshall ({\em Journal of Algebraic Combinatorics}, 8(1):5--13, 1998).
One of their results is that for the class of planar graphs, the size of the smallest $k$\nobreakdash-universal graph is between $k^3+3$ and $5k^4$.
Our results yield that there exists a constant~$c$ such that for all $k$, this size is bounded from above by $ck^3$.

\end{abstract}

\ifthenelse{\boolean{submission}}
{\end{frontmatter}}
{\maketitle}

\section{Introduction}

All graphs considered in this paper are finite and contain no loops or~multiple edges.
For simplicity, by a class of graphs we mean a nonempty set of graphs.
The set of natural numbers $\set{0, 1, \ldots}$ is denoted by $\Nat$.
For every $k \in \mathbb{N}$, the set $\{1,\ldots,k\}$ is denoted by $[k]$.
For two real-valued functions $f$ and $g$ whose domains are cofinite subsets of $\Nat$, we write $f(k) = \Oh{g(k)}$ if there exist constants $n_0$ and $c$ such that $\norm{f(k)} \leq c\norm{g(k)}$ for all $k \geq n_0$;
         $f(k) = \Om{g(k)}$ if $g(k) = \Oh{f(k)}$;
     and $f(k) = \Ot{g(k)}$ if both $f(k) = \Oh{g(k)}$ and $f(k) = \Om{g(k)}$ hold.
We write $f(k) = \oh{g(k)}$ if for every $\epsi > 0$ there exists a constant $n_0$ such that $\norm{f(k)} \leq \epsi\norm{g(k)}$ for all $k \geq n_0$.

A \emph{$k$-edge-colored graph} $\kG$ is a pair $(G,c)$, where $G$ is a graph, called \emph{an~underlying graph} of $\kG$, and $c$ is a mapping from $E(G)$ to $[k]$, called a~\emph{$k$\nobreakdash-edge-coloring} of $\kG$.
A \emph{$k$-edge-colored graph over} $G$ is a $k$-edge-colored graph with the underlying graph $G$.

Let $\kG_1 = (G_1,c_1)$ and $\kG_2=(G_2,c_2)$ be two $k$-edge-colored graphs.
A~mapping $h:V(G_1) \to V(G_2)$ is a \emph{homomorphism} of $\kG_1$ to $\kG_2$ if, for every two vertices $u$ and $v$ that are adjacent in $G_1$, $h(u)$ and $h(v)$ are adjacent in $G_2$ and $c_1(\{u,v\}) = c_2(\{h(u),h(v)\})$.
In other words, a homomorphism of $\kG_1$ to $\kG_2$ maps every colored edge in $\kG_1$ into an edge of the same color in $\kG_2$.

A $k$-edge-colored graph $\kH$ is \emph{$k$-universal} for a class $\mathcal{F}$ of graphs if every $k$\nobreakdash-edge-colored graph over any graph in $\mathcal{F}$ admits a homomorphism to $\kH$.
We denote by $\lambda_{\mathcal{F}}(k)$ the minimum possible number of vertices in a~$k$\nobreakdash-universal graph for $\mathcal{F}$.
We set $\lambda_{\mathcal{F}}(k) = \infty$ if such a graph does not exist.
The main result of this paper is a characterization of graph classes that admit $k$\nobreakdash-universal graphs.
For any such class $\mathcal{F}$ of graphs, the asymptotic behavior of $\lambda_{\mathcal{F}}(k)$ is determined.

Observe that $\lambda_{\mathcal{F}}(1)$ is the maximum chromatic number of all graphs in $\mathcal{F}$.
Although this parameter is of great importance in graph theory, this paper is focused on the behavior of $\lambda_{\mathcal{F}}(k)$ when $k$ tends to infinity.
In particular, the case $k=1$ differs significantly from the case $k \geq 2$.
Only the latter one is the subject of this paper.

The crucial notion that helps to determine if a given graph class admits $k$\nobreakdash-universal graphs is the acyclic coloring.
An \emph{acyclic coloring} of a graph is an assignment of colors to the vertices of the graph such that:
\begin{itemize}
\item[(i)] every two adjacent vertices get different colors,
\item[(ii)] vertices of any cycle in the graph get at least $3$ different colors.
\end{itemize}
In other words, an acyclic coloring is a proper coloring such that, for any two colors, the graph induced by vertices of these two colors is a forest.
The \emph{acyclic chromatic number} of a graph $G$, denoted $\chi_a(G)$, is the minimum number of colors in an acyclic coloring of $G$.
We give the following characterization of graph classes that admit $k$\nobreakdash-universal graphs.
\begin{theorem}
\label{thm:main_theorem_existence}
Let $k \geq 2$.
A class $\mathcal{F}$ of graphs admits a $k$\nobreakdash-universal graph if and only if there is an absolute constant $r$ such that $\chi_a(G) \leq r$ for every $G$ in $\mathcal{F}$.
\end{theorem}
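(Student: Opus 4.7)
\emph{Proof plan.} I would prove both directions of the equivalence separately. For the necessity direction ($\Rightarrow$), suppose $\kH$ is a $k$-universal graph for $\mathcal{F}$ with $N = \norm{V(\kH)}$. A counting argument first bounds the density of every subgraph: each $k$-edge-coloring of any subgraph $H$ of any $G \in \mathcal{F}$ extends to a $k$-edge-coloring of $G$ and therefore, by universality, admits a homomorphism to $\kH$ whose restriction to $V(H)$ is a homomorphism from the colored $H$. Since each map $V(H) \to V(\kH)$ is a homomorphism for at most one coloring (the color of every edge is forced by the images of its endpoints), we obtain $k^{\norm{E(H)}} \leq N^{\norm{V(H)}}$, so $\norm{E(H)}/\norm{V(H)} \leq \log_k N$ for every subgraph $H$ of $G$. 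By the Nash--Williams theorem, the arboricity of $G$ is bounded by some $a = a(N,k)$. Fix a forest decomposition $E(G) = F_1 \cup \dots \cup F_a$. For each $s \in [a]$, let $\kG_s$ be the $k$-edge-coloring of $G$ that assigns color $1$ to $F_s$ and color $2$ to the remaining edges (valid since $k \geq 2$), and let $\phi_s : V(G) \to V(\kH)$ be a homomorphism of $\kG_s$ to $\kH$ guaranteed by universality. Set $\Phi = (\phi_1, \dots, \phi_a)$: a proper coloring of $G$ with at most $N^a$ colors. I claim $\Phi$ is acyclic. If some cycle $C$ were $2$-colored under $\Phi$ with classes $\vec u, \vec v$, then for every $s$ all edges of $C$ would lie between $\phi_s^{-1}(u_s)$ and $\phi_s^{-1}(v_s)$ and hence be monochromatic in $\kG_s$; but $C$ uses edges from at least two forests (since no $F_s$ contains a cycle), so any $s \in [a]$ with $\emptyset \subsetneq F_s \cap E(C) \subsetneq E(C)$ gives a contradiction. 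Therefore $\chi_a(G) \leq N^a$.

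For the sufficiency direction ($\Leftarrow$), assume $\chi_a(G) \leq r$ for all $G \in \mathcal{F}$ and construct a finite $k$-universal target $\kH$ following the signature approach of Alon and Marshall. Take $V(\kH)$ to consist of all tuples $(i, \vec a)$, where $i \in [r]$ represents an acyclic color and, for each $j \neq i$, $a_j \in [k] \cup \{\star\}$ records the color of the edge from a vertex of color $i$ to its parent in the rooted forest $F_{i,j}$ (or $\star$ if the vertex is a root in that forest). Given a $k$-edge-colored graph $\kG$ over $G$, fix an acyclic coloring $\psi$ and root each tree of each $F_{i,j}$; sending each $v$ to its signature $(\psi(v), \vec a_v)$ becomes a homomorphism once the edges of $\kH$ are chosen suitably. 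Since $\kH$ is required to be simple, the naive edge rule can clash when both endpoints of an edge of $G$ are non-root in the relevant forest, apparently requiring two different colors for the same pair of signatures; I resolve this by enriching each signature with a depth-parity bit per coordinate, breaking the parent--child symmetry. The resulting target has $\Oh{r \cdot k^{r-1}}$ vertices, and the homomorphism property follows by a direct case analysis.

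The main obstacle is the construction of the universal target in the sufficiency direction: the naive signature map does not produce a homomorphism into a simple graph because the same pair of signatures can demand edges of two different colors depending on the parent--child orientation of the corresponding edge in $G$, and overcoming this requires an explicit mechanism (such as the depth-parity refinement) that makes the intended orientation derivable from the signatures alone. The necessity direction is, by contrast, a fairly clean combination of a counting bound and a forest-cover argument.
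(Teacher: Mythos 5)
Your sufficiency direction contains a genuine gap: the depth-parity bit does not break the parent--child symmetry, because the parity of an endpoint does not tell you which endpoint of a forest edge is the parent (in any tree of depth at least $2$ there are edges whose parent has even depth and whose child has odd depth, and also edges with the parities reversed). Concretely, take a path $w_0w_1w_2w_3$ that is properly $2$-colored $i,j,i,j$, so that it forms a single tree of the forest $F_{\{i,j\}}$; root it at $w_0$ and color its edges $b,a,b$ with $a\neq b$. Then $w_1$ and $w_3$ get the same signature (acyclic color $j$, record $b$, parity $1$) and $w_2$ gets (color $i$, record $a$, parity $0$), so the edges $w_1w_2$ and $w_2w_3$ map to the same pair of vertices of $\kH$, yet they must receive colors $a$ and $b$ respectively; no fixed edge-coloring of $\kH$ can satisfy both. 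Re-rooting does not save the scheme: any rooting of a sufficiently long $2$-colored path contains this depth $1$--$2$--$3$ pattern, and avoiding it altogether would require every two-colored component to be a star, i.e.\ a star coloring rather than an acyclic coloring --- which is exactly the difficulty your parity bit was supposed to circumvent. The actual mechanism of Alon and Marshall~\cite{AlonM98} is different: for each pair of colors one assigns to each vertex a label in $\mathbb{Z}_k$, defined top-down in the rooted tree so that the labels of the two endpoints of any forest edge sum to that edge's color; the target's edge color is then the symmetric sum of the two relevant coordinates, and no information about which endpoint is the parent is needed. The simplest repair of your write-up is to do what the paper does and invoke \cite{AlonM98} for this direction, or to route it through star/out-colorings as in Lemmas~\ref{lem:star_coloring_implies_compatible_coloring} and~\ref{lem:main_theorem_upper_bound}.

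Your necessity direction, by contrast, is correct and takes a genuinely different and rather elegant route. After the counting bound $k^{\norm{E(H)}}\leq N^{\norm{V(H)}}$ (the paper's Lemma~\ref{lem:lower_bound_density}), the paper goes through Hakimi's theorem (Theorem~\ref{thm:Hakimi}) to a $d$-orientation and then builds an out-coloring by hand, encoding parent indices in base $k$ and fixing grandparent conflicts with an auxiliary graph (Lemma~\ref{lem:lower_bounds_orientation_compatible}); you instead pass through Nash--Williams to a decomposition into a bounded number of forests $F_1,\dots,F_a$ and take the product of the homomorphisms of the two-colorings singling out each $F_s$. Your acyclicity argument is sound: a cycle that is $2$-colored by the product map alternates between two color classes, hence all its edges map to a single edge of $\kH$ and are monochromatic in every $\kG_s$, while some $F_s$ meets the cycle properly and makes it bichromatic there. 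This yields $\chi_a(G)\leq N^a$ with $a=\Oh{\log_k N}$, a worse constant than the paper's but entirely sufficient for the theorem, and arguably a cleaner proof of this implication.
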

In particular, this theorem gives that a graph class either admits a $k$\nobreakdash-universal graph for all $k \geq 2$ or for no $k \geq 2$.

A strong connection between the acyclic coloring and the existence of universal graphs was first noted by Alon and Marshall~\cite{AlonM98}.
They proved that if the acyclic chromatic number of any graph in $\mathcal{F}$ is at~most $r$, then $\mathcal{F}$ admits a $k$\nobreakdash-universal graph on at~most $rk^{r-1}$ vertices.
This shows that the bounded acyclic number is a sufficient condition for a class $\mathcal{F}$ of graphs to admit $k$\nobreakdash-universal graphs.
We show that this condition is also necessary.

Alon and Marshall~\cite{AlonM98} used their result to construct a small $k$\nobreakdash-universal graph for the class of planar graphs $\mathcal{P}$.
Their technique, combined with the famous result of Borodin~\cite{Borodin79} that every planar graph has acyclic chromatic number at most $5$, gives $\lambda_{\mathcal{P}}(k) = \Oh{k^4}$.
Alon and Marshall gave a lower bound $\lambda_{\mathcal{P}}(k) = \Om{k^3}$ and asked for the exact asymptotics of $\lambda_{\mathcal{P}}(k)$.

Theorem~\ref{thm:main_theorem_upper_bound} and Theorem~\ref{thm:main_theorem_lower_bound_density} allow to determine the asymptotics of $\lambda_{\mathcal{F}}(k)$ for any class $\mathcal{F}$ of graphs of bounded acyclic chromatic number.
In particular, for planar graphs we obtain that $\lambda_{\mathcal{P}}(k) = \Ot{k^3}$.
In general, we show that the asymptotic behavior of $\lambda_{\mathcal{F}}(k)$ for a class $\mathcal{F}$ of graphs of bounded acyclic chromatic number depends on the density of $\mathcal{F}$.
The \emph{density} of a~graph $G$, denoted $D(G)$, is defined as 
 $$D(G) = \max \set{ \frac{\norm{E(G')}}{\norm{V(G')}}\ :\ G' \text{ is a nonempty subgraph of } G }\mathrm{,}$$
and the \emph{density} of a class $\mathcal{F}$ of graphs, denoted $D(\mathcal{F})$, is given by
 $$D(\mathcal{F}) = \sup \set{ D(G)\ :\ G \in \mathcal{F} }\mathrm{.}$$

Hakimi~\cite{Hakimi65} was the first to observe that graphs of low density admit orientations of low in-degree.
An \emph{orientation} of a graph $G$ is an assignment of direction to each edge of $G$, which turns $G$ into an oriented graph~$\dir{G}$.
If an edge $\{a,b\}$ of $G$ is oriented from $a$ to $b$, then $(a,b)$ is an edge in $\dir{G}$, $a$ is the \emph{tail}, and $b$ is the \emph{head} of $(a,b)$.
\emph{In-degree} of a vertex $b$ is the number of different edges in $\dir{G}$ with head $b$.
An orientation $\dir{G}$ of $G$ is a \emph{$d$-orientation} if every vertex of $\dir{G}$ has in-degree at most $d$.

By pigeonhole principle, a graph $G$ does not admit a $\brac{\ceil{D(G)}-1}$-ori\-en\-ta\-tion.
On the other hand, Hakimi~\cite{Hakimi65} proved that any graph $G$ admits a $\ceil{D(G)}$-orientation.

\begin{theorem}[Hakimi~\cite{Hakimi65}]
\label{thm:Hakimi}
Let $\mathcal{F}$ be a class of graphs.
Every graph in $\mathcal{F}$ admits a $d$-orientation if and only if the density of $\mathcal{F}$ satisfies $D(\mathcal{F}) \leq d$.
\end{theorem}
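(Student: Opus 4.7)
The plan is to handle the two implications separately, with the easy direction first and a Hall's theorem argument for the harder one.

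For the forward direction, suppose every graph in $\mathcal{F}$ admits a $d$-orientation. Fix any $G \in \mathcal{F}$ and a $d$-orientation $\dir{G}$ of $G$, and let $G'$ be a nonempty subgraph of $G$. The orientation $\dir{G}$ restricted to the edges of $G'$ assigns each such edge a head in $V(G')$, so the edges of $G'$ are partitioned among the vertices of $G'$ according to their heads. Each vertex of $G'$ receives at most $d$ edges (even fewer than in the whole $\dir{G}$), so $\norm{E(G')} \leq d \norm{V(G')}$. Hence $D(G) \leq d$, and taking the supremum gives $D(\mathcal{F}) \leq d$.

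For the backward direction, assume $D(\mathcal{F}) \leq d$ and fix $G \in \mathcal{F}$. The idea is to produce a $d$-orientation by selecting, for each edge, one endpoint to be its head, in such a way that no vertex is selected more than $d$ times. Build an auxiliary bipartite graph $B$ with parts $E(G)$ and $V(G) \times [d]$, where each edge $e=\{u,v\} \in E(G)$ is joined in $B$ to all $2d$ vertices $(u,i),(v,i)$ for $i \in [d]$. A matching in $B$ saturating $E(G)$ yields the desired orientation: for each edge $e$, orient it towards the vertex appearing in its matched copy; each $v \in V(G)$ has only $d$ copies, so at most $d$ edges can be oriented towards $v$.

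To verify Hall's condition for the $E(G)$ side of $B$, take any $S \subseteq E(G)$ and let $V(S)$ denote the set of endpoints of edges in $S$. Then $N_B(S) = V(S) \times [d]$, so $\norm{N_B(S)} = d\norm{V(S)}$. Because $(V(S),S)$ is a nonempty subgraph of $G$, its density is bounded by $D(G) \leq D(\mathcal{F}) \leq d$, which gives $\norm{S} \leq d\norm{V(S)} = \norm{N_B(S)}$. Hall's theorem therefore produces a matching saturating $E(G)$, completing the proof.

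The only subtle step is framing the orientation problem as a bipartite matching so that Hall's condition translates cleanly into the density bound; once that translation is set up, both checking Hall's condition and reading off the orientation from the matching are routine.
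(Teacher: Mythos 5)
Your proof is correct, but note that the paper does not prove this statement at all: it is quoted as Hakimi's theorem with a citation, so what you have written is a self-contained proof of an imported result rather than a variant of an internal argument. Both directions of your argument check out: the forward direction is exactly the pigeonhole count the paper mentions just before the theorem (restricting the orientation to a subgraph $G'$ gives $\norm{E(G')}\leq d\norm{V(G')}$), and for the converse, encoding the choice of a head for each edge as a matching in the bipartite graph between $E(G)$ and $V(G)\times[d]$ is sound, with Hall's condition following from $\norm{S}\leq d\norm{V(S)}$ applied to the subgraph $(V(S),S)$ (the case $S=\emptyset$ is trivial, and the matched copies bound each in-degree by $d$). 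This Hall-type reduction is a clean alternative to Hakimi's original approach, which argues by repeatedly reversing directed paths into a vertex of excessive in-degree (essentially a flow argument); the matching formulation buys a shorter verification, while the path-reversal/flow proof additionally gives an efficient algorithm and handles vertex-dependent in-degree bounds. One small caveat: your construction (and the theorem as used in the paper) implicitly takes $d$ to be a nonnegative integer, since $[d]$ and ``in-degree at most $d$'' are integral notions; for fractional $d$ the stated equivalence would actually fail, so this reading is the intended one and your proof is consistent with it.
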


The next lemma shows a connection between the acyclic chromatic number and orientations of low in-degree.

\begin{lemma}
\label{lem:bounded_acyclic_orientation}
If a graph $G$ admits an acyclic coloring with $r$ colors, then $G$ admits an $(r-1)$-orientation.
\end{lemma}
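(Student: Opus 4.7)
The plan is to decompose the edge set of $G$ according to pairs of colors from the acyclic coloring, orient each piece independently so that each vertex has in-degree at most $1$ inside that piece, and then sum the contributions across the $r-1$ pieces that meet any given vertex.

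More concretely, fix an acyclic coloring $\varphi \colon V(G) \to [r]$. For each unordered pair $\{i,j\}$ of distinct colors in $[r]$, let $G_{ij}$ denote the subgraph of $G$ induced by $\varphi^{-1}(i) \cup \varphi^{-1}(j)$. By condition~(ii) in the definition of an acyclic coloring, $G_{ij}$ contains no cycle, so it is a forest. Every edge of $G$ lies in exactly one such $G_{ij}$, namely the one determined by the colors of its two endpoints (these colors are distinct by condition~(i)). Hence the family $\{E(G_{ij}) : 1 \leq i < j \leq r\}$ partitions $E(G)$.

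Next I would invoke the elementary fact that any forest admits an orientation in which every vertex has in-degree at most $1$: root each tree arbitrarily and orient every edge away from its root, so that each non-root vertex has exactly one in-neighbor (its parent) and the root has none. Applying this to each $G_{ij}$ independently yields an orientation $\dir{G}$ of $G$, well-defined because the edge sets $E(G_{ij})$ are disjoint.

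Finally, consider any vertex $v \in V(G)$ and write $i = \varphi(v)$. Every edge of $G$ incident to $v$ lies in some $G_{ij}$ with $j \in [r] \setminus \{i\}$, and in the orientation of $G_{ij}$ the vertex $v$ has in-degree at most $1$. Summing over the $r-1$ possible values of $j$, the in-degree of $v$ in $\dir{G}$ is at most $r-1$, as required. The proof is essentially routine; the only point that needs verification is the observation that the forests $G_{ij}$ partition $E(G)$, which is immediate from the definition of an acyclic coloring.
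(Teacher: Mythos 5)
Your proof is correct and follows exactly the same route as the paper: partition the edges into the bicolored induced forests $G_{ij}$, orient each forest so that every vertex has in-degree at most $1$, and observe that each vertex meets only the $r-1$ pairs involving its own color. You simply spell out the details (rooting the trees, the edge partition) that the paper leaves implicit.
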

\begin{proof}
Suppose that there is an acyclic coloring of $G$ with colors in the set $[r]$.
We show that $G$ admits an $(r-1)$-orientation.
Fix $i,j \in [r]$, $i < j$.
Since the graph induced by the vertices colored $i$ or $j$ is a forest, there is an~orientation of this forest such that every vertex is a head of at most one edge.
If we repeat this argument for all pairs of colors, we obtain an $(r-1)$-orientation of $G$.
\end{proof}
Using Theorem~\ref{thm:Hakimi} and Lemma~\ref{lem:bounded_acyclic_orientation} we immediately obtain the following lemma.
\begin{lemma}
\label{lem:bounded_acyclic_density}
Let $\mathcal{F}$ be a class of graphs for which there is an absolute constant~$r$ such that $\chi_a(G) \leq r$ for every $G$ in $\mathcal{F}$.
The density $D(\mathcal{F})$ is bounded and
 $$D(\mathcal{F}) \leq r-1\mathrm{.}$$
\end{lemma}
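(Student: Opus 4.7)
The plan is to chain together the two results that have just been established in the excerpt. The hypothesis gives a uniform bound $r$ on the acyclic chromatic number across $\mathcal{F}$; Lemma~\ref{lem:bounded_acyclic_orientation} converts this into an orientation statement (every $G \in \mathcal{F}$ admits an $(r-1)$-orientation), and Theorem~\ref{thm:Hakimi} then converts that orientation statement into a density bound $D(\mathcal{F}) \leq r-1$.

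More concretely, I would proceed as follows. Fix an arbitrary $G \in \mathcal{F}$. By hypothesis $\chi_a(G) \leq r$, so $G$ admits an acyclic coloring with at most $r$ colors. Lemma~\ref{lem:bounded_acyclic_orientation} then provides an $(r-1)$-orientation of $G$. Since $G$ was arbitrary, every graph in $\mathcal{F}$ admits an $(r-1)$-orientation, which is precisely the hypothesis of Theorem~\ref{thm:Hakimi} with $d = r-1$. Invoking that theorem yields $D(\mathcal{F}) \leq r-1$, which is the desired conclusion.

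There is essentially no obstacle here: the lemma is a direct composition of the two implications $\chi_a(G) \leq r \Rightarrow G$ admits an $(r-1)$-orientation $\Rightarrow D(G) \leq r-1$, already justified individually. The only point worth flagging is the passage from a per-graph statement to the supremum defining $D(\mathcal{F})$, but this is handled automatically because Theorem~\ref{thm:Hakimi} is already phrased at the level of a class of graphs.
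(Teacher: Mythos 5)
Your proof is correct and is exactly the argument the paper intends: the lemma is stated there as an immediate consequence of Lemma~\ref{lem:bounded_acyclic_orientation} combined with Theorem~\ref{thm:Hakimi}, which is precisely your chain $\chi_a(G)\leq r \Rightarrow (r-1)$-orientation $\Rightarrow D(\mathcal{F})\leq r-1$. Your remark about passing from individual graphs to the class-level supremum being absorbed by the statement of Theorem~\ref{thm:Hakimi} is also accurate.
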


Observe that there are graph classes of bounded density and unbounded acyclic chromatic number.
For an example, let $K^\star_n$ be a graph obtained from the clique $K_n$ by subdividing every edge exactly once.
One may easily verify that the graph class $\{K^\star_n: n \in \mathbb{N}\}$ has density $2$ and unbounded acyclic chromatic number.

The main result of this paper is the following theorem.

\begin{theorem}
\label{thm:main_theorem_upper_bound}
Let $\mathcal{F}$ be a class of graphs for which there are absolute constants $r$ and $d$ such that every graph in $\mathcal{F}$ admits both an acyclic coloring with $r$ colors and a $d$-orientation.
For any $k \geq 2$, the following holds:
 $$\lambda_{\mathcal{F}}(k) \leq 8dr^4 \binom{8dr^4}{d} k^{d}\mathrm{.}$$
In particular, $\lambda_{\mathcal{F}}(k) = \Oh{k^{\ceil{D(\mathcal{F})}}}$.
\end{theorem}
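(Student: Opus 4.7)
The plan is to represent each vertex of an arbitrary $k$-edge-colored graph $\kG = (G, c)$ with $G \in \mathcal{F}$ by a label encoding a refined acyclic color together with the colors of its at-most-$d$ incoming edges, and to take $\kH$ to be the graph whose vertex set consists of all syntactically consistent such labels. Fix an acyclic coloring $\chi : V(G) \to [r]$ and a $d$-orientation $\dir{G}$ provided by the hypotheses, set $R = 8dr^4$, and define
\[
V(\kH) = \bigl\{\,(i, \phi) : i \in [R],\ \phi : [R]\setminus\{i\} \to [k] \cup \{\star\},\ |\phi^{-1}([k])| \leq d\,\bigr\},
\]
so that $|V(\kH)| \leq R \binom{R}{d} k^d = 8dr^4 \binom{8dr^4}{d}k^d$.

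The key technical step is producing a refined coloring $\chi' : V(G) \to [R]$, of the form $\chi'(v) = (\chi(v), \psi(v))$ with $\psi(v) \in [8dr^3]$, satisfying (P1) $\chi'$ is proper on $G$, (P2) for every $v$ the at-most-$d$ vertices of $N^-(v)$ receive pairwise distinct $\chi'$-colors, and (P3) for every $u$ the sets $\chi'(N^-(u))$ and $\chi'(N^+(u))$ are disjoint. Given such a $\chi'$, declare a color-$c$ edge between $(i_1, \phi_1)$ and $(i_2, \phi_2)$ (with $i_1 \neq i_2$) exactly when $\{\phi_1(i_2), \phi_2(i_1)\}$ equals $\{c\}$ or $\{c, \star\}$, and set $h(v) = (\chi'(v), \phi_v)$ with $\phi_v(j) = c(uv)$ when $v$ has a (unique, by (P2)) in-neighbor $u \in N^-(v)$ with $\chi'(u) = j$ and $\phi_v(j) = \star$ otherwise. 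For any directed edge $u \to v$ of $\dir{G}$, one has $\phi_v(\chi'(u)) = c(uv)$ by construction, and property (P3) forces $\phi_u(\chi'(v)) = \star$, so the $\kH$-edge between $h(u)$ and $h(v)$ carries exactly the color $c(uv)$, confirming that $h \colon \kG \to \kH$ is a homomorphism.

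The main obstacle is constructing $\chi'$ with only $R$ colors. I would proceed by a Lovász-Local-Lemma-type argument on random $\psi(v) \in [8dr^3]$ chosen uniformly and independently: bad events are collisions $\psi(x) = \psi(y)$ over pairs $(x, y)$ of the same $\chi$-color that must be separated to enforce (P2) or (P3). Each bad event has probability $1/(8dr^3)$ and depends on only two variables. The acyclic structure of $\chi$ is crucial for bounding dependencies: in each bichromatic forest $F_{ij}$ any two vertices of color $i$ share at most one common $G$-neighbor, so any same-color pair has at most $r-1$ common mediating vertices in $\dir{G}$; combined with the in-degree bound $d$, this caps the number of bad events incident to a fixed variable at $\Oh{dr^3}$. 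The symmetric LLL with these parameters then produces a suitable $\psi$, and the $r^4$ factor in $R$ is consumed by this dependency bound together with the $r$ original acyclic colors.
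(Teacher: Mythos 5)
Your universal graph and the homomorphism you derive from a coloring with properties (P1)--(P3) are sound: (P1)--(P3) are exactly the out-coloring conditions (C1)--(C3) used in the paper, and your $\star$-encoding of the in-edge colors is an equivalent form of the construction in Lemma~\ref{lem:main_theorem_upper_bound}. The gap is in the construction of $\chi'$. Your claim that the number of bad events incident to a fixed variable is $\Oh{dr^3}$ is false, because a $d$-orientation bounds only in-degrees; out-degrees are unbounded. Take the tree with a center $x$, children $v_1,\ldots,v_n$, and a pendant $y_t$ attached to each $v_t$, properly $2$-colored (an acyclic coloring with $r=2$), and orient every edge toward the $v_t$'s (a $2$-orientation). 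Every pair $\{x,y_t\}$ is a (P2)-conflict, so the variable $\psi(x)$ occurs in $n$ pairwise dependent bad events, and the symmetric Local Lemma condition $e\,p\,(D+1)\le 1$ fails for large $n$ no matter how many values $\psi$ takes. The fact you invoke --- that a same-colored pair has at most $r-1$ common neighbors, by acyclicity --- bounds the \emph{multiplicity} of a single conflict pair, not the number of distinct conflict partners of a fixed vertex, which is what your dependency bound would need.

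This is not a cosmetic slip; it is exactly the point where an extra idea is required. The paper first upgrades the acyclic $r$-coloring to a \emph{star} coloring with at most $2r^2$ colors (Albertson et al.), and the star property (no $2$-colored path on $4$ vertices) is precisely what tames the conflicts: in Lemma~\ref{lem:star_coloring_implies_compatible_coloring} the conflict digraph is shown to have in-degree at most $d(\chi_s(G)-1)$, hence it is $2d\chi_s(G)$-degenerate and can be colored greedily --- no randomness is needed, and the product coloring gives $2d\chi_s(G)^2\le 8dr^4$ colors. The configuration that defeats your count is a $2$-colored double star (as in the example above), which an acyclic coloring permits but a star coloring forbids. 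To repair your argument you would either have to pass through a star coloring as the paper does (after which a degeneracy/greedy argument replaces the Local Lemma and yields the stated bound), or give a genuinely new proof that the conflict graph of an arbitrary acyclic coloring with a $d$-orientation has chromatic number bounded in terms of $d$ and $r$; your current dependency count does not establish this.
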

Observe that for any class $\mathcal{F}$ of graphs of acyclic chromatic number at most $r$, Lemma~\ref{lem:bounded_acyclic_density} guarantees that $D(\mathcal{F}) \leq r - 1$.
Thus, the bound $\lambda_{\mathcal{F}}(k) = \Oh{k^{\ceil{D(\mathcal{F})}}}$ given by Theorem~\ref{thm:main_theorem_upper_bound} is asymptotically no worse than the bound $\lambda_{\mathcal{F}}(k)= \Oh{k^{r-1}}$ obtained by Alon and Marshall~\cite{AlonM98}.
In Section~\ref{sec:applications} we present some natural graph classes for which our bound is significantly better.

The following results show that the upper bound of Theorem~\ref{thm:main_theorem_upper_bound} is asymptotically almost tight.

\begin{lemma}
\label{lem:lower_bound_density}
Let $G$ be a graph, let $k \geq 2$, and let $\kH$ be a $k$-edge-colored graph such that any $k$-edge-colored graph over $G$ admits a homomorphism to $\kH$.
The $k$-edge-colored graph $\kH$ has at least $k^{D(G)}$ vertices.
\end{lemma}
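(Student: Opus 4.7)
The plan is a direct counting argument. Let $G'$ be a nonempty subgraph of $G$ for which the maximum in the definition of $D(G)$ is attained, so that $|E(G')|/|V(G')|=D(G)$. The key observation is that in any $k$-edge-colored graph, the underlying coloring on the image of a homomorphism determines the colors of all edges in the domain. So one should compare the number of $k$-colorings of $G'$ (which is $k^{|E(G')|}$) with the number of possible images of $V(G')$ in $\kH$ (at most $|V(\kH)|^{|V(G')|}$).

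More concretely, I would proceed as follows. First, for every coloring $c : E(G') \to [k]$, extend $c$ arbitrarily to a $k$-edge-coloring $\tilde c$ of the whole graph $G$; by assumption there exists a homomorphism $h_c : V(G) \to V(\kH)$ from $(G,\tilde c)$ to $\kH$. Restrict $h_c$ to $V(G')$ to obtain a map $f_c : V(G') \to V(\kH)$. Second, I would verify the injectivity of the assignment $c \mapsto f_c$: if $f_c = f_{c'}$ for two colorings $c, c'$ of $E(G')$, then for every edge $\{u,v\} \in E(G')$,
\[
c(\{u,v\}) \;=\; c_{\kH}(\{h_c(u), h_c(v)\}) \;=\; c_{\kH}(\{h_{c'}(u), h_{c'}(v)\}) \;=\; c'(\{u,v\}),
\]
where $c_{\kH}$ denotes the coloring of $\kH$ (this uses that $h_c$ preserves colors on edges of $G'$, which are edges of $G$). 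Hence $c = c'$. Third, counting gives
\[
k^{|E(G')|} \;\leq\; |V(\kH)|^{|V(G')|},
\]
and taking both sides to the $1/|V(G')|$ power yields $|V(\kH)| \geq k^{|E(G')|/|V(G')|} = k^{D(G)}$.

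There is no real obstacle; the argument is a single injectivity observation combined with pigeonhole. The only point requiring a small amount of care is ensuring that the restriction map is well defined and actually injective, which is exactly where the hypothesis that homomorphisms preserve colors (not merely adjacency) is used. Note that the argument works for any $k \geq 1$, not only $k \geq 2$, and that the choice of the arbitrary extension of $c$ to $E(G) \setminus E(G')$ plays no role in the conclusion.
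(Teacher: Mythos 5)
Your proof is correct and follows essentially the same counting argument as the paper: colorings of a densest subgraph $G'$ inject into maps $V(G') \to V(\kH)$, giving $k^{\norm{E(G')}} \leq \norm{V(\kH)}^{\norm{V(G')}}$. In fact you are slightly more careful than the paper at one point, since you explicitly extend the coloring of $G'$ to all of $G$ before invoking the hypothesis (which is stated for colorings over $G$) and then restrict the resulting homomorphism, whereas the paper applies the hypothesis to $(G',c_{G'})$ directly.
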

\begin{proof}
Let $G'$ be a nonempty subgraph of $G$ and $c_{G'}$ be some $k$-edge-coloring of $G'$.
By the assumption of the theorem, the $k$-edge-colored graph~$\kG' = (G', c_{G'})$ admits a homomorphism to $\kH$.
Since different colorings need different homomorphisms, we have that the number of $k$-edge-colorings of~$G'$ is at most the number of different functions from $V(G')$ to $V(\kH)$,
 $$k^{\norm{E(G')}} \leq \norm{V(\kH)}^{\norm{V(G')}}\mathrm{.}$$
It follows that $k^{\frac{\norm{E(G')}}{\norm{V(G')}}} \leq \norm{V(\kH)}$ holds for any nonempty subgraph $G'$ of~$G$ and
$$k^{ D(G) } \leq \norm{V(\kH)}\mathrm{.}$$
\end{proof}
Lemma~\ref{lem:lower_bound_density} yields the following lower bound for $\lambda_{\mathcal{F}}(k)$.

\begin{theorem}
\label{thm:main_theorem_lower_bound_density}
For any class $\mathcal{F}$ of graphs, the following holds:
 $$\lambda_{\mathcal{F}}(k) \geq k^{D(\mathcal{F})}\mathrm{.}$$
\end{theorem}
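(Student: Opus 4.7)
The plan is to reduce Theorem~\ref{thm:main_theorem_lower_bound_density} to Lemma~\ref{lem:lower_bound_density} applied graph by graph, and then pass to the supremum. First I would dispose of the trivial case: if $\mathcal{F}$ admits no $k$\nobreakdash-universal graph, then $\lambda_{\mathcal{F}}(k)=\infty$ and the inequality is vacuous. So assume a finite $k$\nobreakdash-universal graph $\kH$ for $\mathcal{F}$ exists, and let $n = |V(\kH)|$.

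Next, I would observe that for every single $G \in \mathcal{F}$, the graph $\kH$ satisfies the hypothesis of Lemma~\ref{lem:lower_bound_density}: every $k$\nobreakdash-edge-colored graph over $G$ is in particular a $k$\nobreakdash-edge-colored graph over some graph in $\mathcal{F}$, and hence admits a homomorphism to $\kH$. Thus Lemma~\ref{lem:lower_bound_density} gives
\[
  n \geq k^{D(G)} \quad \text{for every } G \in \mathcal{F}.
\]
Taking the supremum of the right-hand side over $G \in \mathcal{F}$, it remains to show that $\sup_{G \in \mathcal{F}} k^{D(G)} = k^{D(\mathcal{F})}$. This is where the only (very mild) subtlety lies: since $k \geq 2$, the function $x \mapsto k^x$ is continuous and strictly increasing on $\mathbb{R}$, so it commutes with suprema of bounded sets; if the densities $D(G)$ are unbounded then both sides are $+\infty$, forcing $\lambda_{\mathcal{F}}(k)=\infty$, which contradicts the existence of $\kH$ (so this case is already handled). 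In either situation one concludes
\[
  \lambda_{\mathcal{F}}(k) \;=\; n \;\geq\; \sup_{G \in \mathcal{F}} k^{D(G)} \;=\; k^{\sup_{G \in \mathcal{F}} D(G)} \;=\; k^{D(\mathcal{F})}.
\]

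There is no genuine obstacle here — the content of the statement is already encapsulated in Lemma~\ref{lem:lower_bound_density}. The only thing to be careful about is that $D(\mathcal{F})$ is defined as a supremum rather than a maximum, so the bound $n \geq k^{D(\mathcal{F})}$ need not be witnessed by a single graph $G \in \mathcal{F}$; it is obtained only in the limit. Consequently the integer $n$ satisfies an inequality against a quantity $k^{D(\mathcal{F})}$ that may be irrational, but the inequality $n \geq k^{D(G)}$ holding for all $G \in \mathcal{F}$ is enough to pass to the supremum, completing the proof.
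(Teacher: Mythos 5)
Your proposal is correct and follows essentially the same route as the paper: the paper also fixes a $k$-universal graph $\kH$, applies Lemma~\ref{lem:lower_bound_density} to individual graphs $G \in \mathcal{F}$ with $D(G) \geq D(\mathcal{F}) - \epsilon$, and lets $\epsilon \to 0$, which is just the $\epsilon$-formulation of your passage to the supremum. The only cosmetic difference is that you phrase the limiting step via continuity and monotonicity of $x \mapsto k^x$ rather than via an explicit $\epsilon$, and you should write $\lambda_{\mathcal{F}}(k) \leq n$ with $\kH$ chosen minimal (or just note the bound holds for every universal $\kH$) rather than $\lambda_{\mathcal{F}}(k) = n$.
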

\begin{proof}
Fix $k \geq 2$.
Suppose $\kH$ is a $k$\nobreakdash-universal graph for $\mathcal{F}$.
For every $\epsilon > 0$ there is a graph $G$ in $\mathcal{F}$ such that $D(G) \geq D(\mathcal{F}) - \epsilon$.
Using Lemma~\ref{lem:lower_bound_density} we get
 $$\norm{V(\kH)} \geq k^{D(G)} \geq k^{D(\mathcal{F}) - \epsilon}.$$
Since the inequality above holds for every $\epsilon > 0$, the claim of the theorem follows.
\end{proof}

To sum up, for a class $\mathcal{F}$ of graphs of bounded acyclic chromatic number we have
$$\lambda_{\mathcal{F}}(k) = \Om{k^{D(\mathcal{F})}} \text{ and } \lambda_{\mathcal{F}}(k) = \Oh{k^{ \ceil{D(\mathcal{F})} }} \mathrm{,}$$
which is tight for graph classes of integral density.
For other graph classes, we suspect that the lower bound describes the correct asymptotics of $\lambda$.
The multiplicative constant hidden by the asymptotic notation depends on the bound on the acyclic chromatic number and is quite big.
For small values of $k$, the upper bound by Alon and Marshall is substantially better.

\section{Applications.}\label{sec:applications}

In this section we give two examples of application of our results.
We establish the asymptotics of $\lambda$ for the class $\mathcal{P}$ of planar graphs and for the class $\mathcal{G}_g$ of graphs embeddable on an oriented surface of genus $g$.

\subsection{Planar graphs.}
Borodin~\cite{Borodin79} showed that the acyclic chromatic number of any planar graph is at most 5.
The density $D(\mathcal{P})$ of planar graphs is~$3$.
Consequently, Theorem~\ref{thm:main_theorem_upper_bound} and Theorem~\ref{thm:main_theorem_lower_bound_density} yield the following.
\begin{corollary}
 $$\lambda_{\mathcal{P}}(k) = \Ot{k^3}\mathrm{.}$$
\end{corollary}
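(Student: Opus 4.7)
The plan is to simply plug the two standard parameters of the planar graph class into the general upper and lower bounds already established. First I would pin down the density: by Euler's formula, any simple planar graph on $n \geq 3$ vertices has at most $3n-6$ edges, so every nonempty subgraph of a planar graph has edge-to-vertex ratio strictly less than $3$. On the other hand, taking a sequence of planar triangulations shows the ratio approaches $3$ from below, giving $D(\mathcal{P}) = 3$. (Since the class is closed under taking subgraphs, the density of the class equals the supremum of ratios of its members.)

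Next I would invoke Borodin's theorem~\cite{Borodin79}, which provides the absolute constant $r = 5$ bounding the acyclic chromatic number on $\mathcal{P}$. Combined with Theorem~\ref{thm:Hakimi} applied with $d = \lceil D(\mathcal{P}) \rceil = 3$, every planar graph admits a $3$-orientation, so both hypotheses of Theorem~\ref{thm:main_theorem_upper_bound} are met with $r = 5$ and $d = 3$. Plugging these in yields $\lambda_{\mathcal{P}}(k) \leq 8 \cdot 3 \cdot 5^4 \binom{8 \cdot 3 \cdot 5^4}{3} k^3 = O(k^3)$.

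For the matching lower bound, I would apply Theorem~\ref{thm:main_theorem_lower_bound_density} directly: $\lambda_{\mathcal{P}}(k) \geq k^{D(\mathcal{P})} = k^3$. Combining the two bounds gives $\lambda_{\mathcal{P}}(k) = \Theta(k^3)$, which is the claim.

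There is no real obstacle here — the corollary is essentially a one-line consequence of the two main theorems together with Borodin's theorem and the Euler-formula density computation. The only minor point worth being careful about is that $D(\mathcal{P})$ is defined as a supremum and is not attained by any single planar graph; this causes no issue because Theorem~\ref{thm:main_theorem_lower_bound_density} already takes the supremum into account in its proof, and the upper bound only uses $\lceil D(\mathcal{P}) \rceil = 3$, which equals the integer $d$ supplied by Hakimi's theorem.
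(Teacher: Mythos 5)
Your proposal is correct and follows essentially the same route as the paper: Borodin's bound $r=5$, the density $D(\mathcal{P})=3$, and then Theorem~\ref{thm:main_theorem_upper_bound} for the upper bound and Theorem~\ref{thm:main_theorem_lower_bound_density} for the lower bound. You merely fill in the standard details (the Euler-formula computation of $D(\mathcal{P})$ and the explicit appeal to Theorem~\ref{thm:Hakimi} for a $3$-orientation) that the paper leaves implicit in the ``in particular'' clause of Theorem~\ref{thm:main_theorem_upper_bound}.
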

This answers the question asked by Alon and Marshall~\cite{AlonM98}.

\subsection{Graphs of bounded genus.}
Let $\mathcal{G}_g$ denote the class of all graphs embeddable on an oriented surface of genus $g$.
Alon, Mohar, and Sanders~\cite{AlonMS94} showed that the acyclic chromatic number of any graph in $\mathcal{G}_g$ is at most $\Oh{g^{\frac{4}{7}}}$.
They also provide a construction of a graph in $\mathcal{G}_g$ of acyclic chromatic number at least $\Om{g^{\frac{4}{7}}/(\log{g})^{\frac{1}{7}}}$.
We show that the density $D(\mathcal{G}_g)$ is of a smaller order:
  $$\sqrt{3g} - \frac{1}{2} \leq D(\mathcal{G}_g) \leq \sqrt{3g}+3\mathrm{.}$$

For the lower bound, let $t = \ceil{ \sqrt{12g} }$, and let $K_{t}$ be a clique on $t$ vertices.
Thanks to Ringel and Youngs~\cite{RingelY68} and other authors, we know that $K_t$ is embeddable on an oriented surface of genus $\ceil{\frac{(t-3)(t-4)}{12}}$.
Thus, $K_{t}$ is in $\mathcal{G}_g$.
The density of $K_{t}$ equals
 $$D(K_{t}) = \frac{\norm{E(K_{t})}}{\norm{V(K_{t})}} = \frac{t \cdot (t-1)}{2 \cdot t} = \frac{\ceil{\sqrt{12g}} -1}{2} \geq \sqrt{3g}-\frac{1}{2}\mathrm{,}$$
which proves the lower bound.

For the upper bound, let $G$ be a graph in $\mathcal{G}_g$ and $G'$ be a nonempty subgraph of $G$ with the highest ratio $\frac{\norm{E(G')}}{\norm{V(G')}}$.
If $\norm{V(G')} \leq t$, then $D(G) \leq \sqrt{3g}$ as the density of a graph on $t$ vertices does not exceed the density of $K_t$.
Let $n > t$, $m$, and $f$ denote respectively the number of vertices, edges, and faces in some embedding of $G'$ on an oriented surface of genus $g$.
By Euler's formula, we have $n-m+f = 2-2g$.
Multiplying this equality by $3$ and plugging $3f \leq 2m$ we get that $m \leq 3n - 6 + 6g$.
Since $n > t$ and $t = \ceil{ \sqrt{12g} }$ we obtain
 $$\frac{m}{n} \leq 3 - \frac{6}{n} + \frac{6g}{n} \leq \sqrt{3g}+3\mathrm{,}$$
which proves the upper bound.

Consequently, Theorem~\ref{thm:main_theorem_upper_bound} and Theorem~\ref{thm:main_theorem_lower_bound_density} yield the following.
\begin{corollary}
  $$
  k^{\sqrt{3g}-\frac{1}{2}} \leq
  \lambda_{\mathcal{G}_g}(k) = \Oh{k^{\ceil{\sqrt{3g}+3}}}
  \mathrm{.}$$
\end{corollary}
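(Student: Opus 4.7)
The plan is to assemble this corollary from pieces that are already in hand: the density estimates $\sqrt{3g}-\frac{1}{2} \leq D(\mathcal{G}_g) \leq \sqrt{3g}+3$ just derived, Theorem~\ref{thm:main_theorem_lower_bound_density}, Theorem~\ref{thm:main_theorem_upper_bound}, Hakimi's Theorem~\ref{thm:Hakimi}, and the Alon--Mohar--Sanders bound on the acyclic chromatic number of graphs embeddable on surfaces of bounded genus.

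For the left-hand inequality I would simply apply Theorem~\ref{thm:main_theorem_lower_bound_density} to $\mathcal{G}_g$ and substitute the lower density bound, obtaining
$\lambda_{\mathcal{G}_g}(k) \geq k^{D(\mathcal{G}_g)} \geq k^{\sqrt{3g}-\frac{1}{2}}$.

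For the right-hand inequality I would appeal to Theorem~\ref{thm:main_theorem_upper_bound}. That theorem requires absolute constants $r$ and $d$ such that every graph in $\mathcal{G}_g$ admits both an acyclic coloring with $r$ colors and a $d$-orientation. With $g$ regarded as fixed, Alon--Mohar--Sanders provides $r = \Oh{g^{4/7}}$, and Hakimi's theorem combined with the upper density bound $D(\mathcal{G}_g) \leq \sqrt{3g}+3$ yields a $\ceil{\sqrt{3g}+3}$-orientation of every member of $\mathcal{G}_g$. Taking $d = \ceil{\sqrt{3g}+3}$, the conclusion of Theorem~\ref{thm:main_theorem_upper_bound} becomes $\lambda_{\mathcal{G}_g}(k) \leq C \cdot k^{\ceil{\sqrt{3g}+3}}$ for a constant $C$ depending only on $g$, which is exactly the claimed $\Oh{k^{\ceil{\sqrt{3g}+3}}}$ bound.

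The corollary is essentially a bookkeeping step, so there is no serious obstacle. The only point worth checking is that the constants $r$ and $d$ fed into Theorem~\ref{thm:main_theorem_upper_bound} depend neither on the individual graph nor on $k$; this is automatic here, since both the Alon--Mohar--Sanders bound and the density estimate for $\mathcal{G}_g$ depend only on $g$.
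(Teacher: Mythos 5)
Your proposal is correct and matches the paper's (largely implicit) argument: the lower bound is Theorem~\ref{thm:main_theorem_lower_bound_density} combined with $D(\mathcal{G}_g) \geq \sqrt{3g}-\frac{1}{2}$, and the upper bound is Theorem~\ref{thm:main_theorem_upper_bound} applied with the Alon--Mohar--Sanders acyclic bound and a $\ceil{\sqrt{3g}+3}$-orientation from Hakimi's theorem and $D(\mathcal{G}_g) \leq \sqrt{3g}+3$, with all constants depending only on $g$.
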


\section{Main results}

The proofs of Theorem~\ref{thm:main_theorem_existence} and Theorem~\ref{thm:main_theorem_upper_bound} use the notion of star coloring.
A \emph{star coloring} of a graph is an~assignment of colors to the vertices of the graph such that:
\begin{itemize}
\item[(i)] every two adjacent vertices get different colors,
\item[(ii)] every subsequent four vertices on any path in the graph get at least $3$ different colors.
\end{itemize}
In other words, a star coloring is a proper coloring such that, for any two colors, every connected component in the graph induced by vertices of these two colors has at most one vertex of degree higher than one.
The \emph{star chromatic number} of a graph $G$, denoted $\chi_s(G)$, is the minimum number of colors in a star coloring of $G$.
In particular, any star coloring of $G$ is an acyclic coloring of $G$.
On the other hand, Albertson \etal~\cite{AlbertsonCKKR04} showed that any acyclic coloring with $r$ colors can be used to construct a star coloring with at most $2r^2 - r$ colors.

We need to introduce yet another version of coloring that we use in our proofs.
Let $\dir{G}$ be an orientation of a graph $G$.
We use the following notions: if $(u,v)$ is an edge of $\dir{G}$, then $u$ is a \emph{parent} of $v$; if $(u,v)$ and $(v,w)$ are edges of $\dir{G}$, then $u$ is a \emph{grandparent} of $w$.
An \emph{out-coloring} of an oriented graph is an assignment of colors to the vertices of the graph such that:
\begin{itemize}
  \item[(C1)] every two adjacent vertices get different colors,
  \item[(C2)] every two distinct parents of a single vertex get different colors,
  \item[(C3)] any vertex and any of its grandparents get different colors.
\end{itemize}
Clearly, any out-coloring of $\dir{G}$ is a star coloring of $G$ and hence an~acyclic coloring of $G$.

The same coloring was studied before in the context of star colorings
by Ne\v{s}et\v{r}il and Ossona de~Mendez~\cite{NesetrilO03},
and as in-coloring by Albertson \etal~\cite{AlbertsonCKKR04}.
An \emph{in-coloring} of $\dir{G}$ is a proper coloring in which for every 2-colored path of 3 vertices in $\dir{G}$, the edges are directed towards the middle vertex of the path.
It is easy to see that a coloring of vertices of $\dir{G}$ is an out-coloring of $\dir{G}$ if and only if it is an in-coloring of the transpose of $\dir{G}$.

The following result allows a construction of an out-coloring with a~small number of colors for a graph with low star chromatic number and an orientation of low in-degree.
The proof of the lemma is similar to the proof of Theorem~2.1 in the article by Ne\v{s}et\v{r}il and Ossona de~Mendez~\cite{NesetrilO03}.

\begin{lemma}
\label{lem:star_coloring_implies_compatible_coloring}
If $\dir{G}$ is a $d$-orientation of $G$, then $\dir{G}$ admits an out-coloring with $2d \cdot \chi_s(G)^2$ colors.
\end{lemma}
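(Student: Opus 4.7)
The plan is to construct the out-coloring by refining a star coloring. Let $\sigma \colon V(G) \to [s]$ be a star coloring of $G$ with $s = \chi_s(G)$ colors; my out-coloring will have the form $f(v) = (\sigma(v), \alpha(v))$, where $\alpha \colon V(G) \to [2ds]$ is an auxiliary coloring to be constructed, using $s \cdot 2ds = 2ds^2$ colors in total. Condition (C1) holds automatically because $\sigma$ is proper, so conditions (C2) and (C3) need to be enforced only among vertices sharing the same $\sigma$-value; equivalently, for each color class $V_i = \sigma^{-1}(i)$, the restriction $\alpha|_{V_i}$ must be a proper coloring of the ``conflict graph'' $H_i$ on $V_i$ whose edges connect two vertices whenever they either share a common out-neighbor in $\dir{G}$ or are in a grandparent--grandchild relation.

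The key structural input comes from the star-coloring property: for any two colors $i,j$, the induced bichromatic subgraph is a star forest. I would use this to establish the following combinatorial claim: for each $v \in V_i$ and each $j \neq i$, at most one of $v$'s neighbors of color $j$ can itself have a second neighbor in $V_i$. Indeed, if there were two such neighbors $w_1, w_2$, then the $(i,j)$-bichromatic subgraph would contain a connected component with three vertices ($v$, $w_1$, $w_2$) of degree at least two, contradicting the star-forest property. Combined with the in-degree bound $d$, this immediately yields two per-vertex estimates: any $v \in V_i$ has at most $(s-1)(d-1)$ vertices in $V_i$ sharing a common out-neighbor with it, and at most $(s-1)d$ grandparents in $V_i$.

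To bound $\chi(H_i) \leq 2ds$, I would order the vertices of $\dir{G}$ in a linear extension of the parent-to-child relation (a topological order when $\dir{G}$ is acyclic) and color $V_i$ greedily in reverse. In this order, the only back-edges of $H_i$ incident to a vertex $v$ come from common-out-neighbor partners and from grandparents, for a total of at most $(s-1)(d-1) + (s-1)d = (s-1)(2d-1) < 2ds$. Hence $H_i$ has degeneracy strictly less than $2ds$, and the greedy coloring uses at most $2ds$ colors on $V_i$. Combined with $\sigma$, this produces an out-coloring of $\dir{G}$ with at most $2ds^2$ colors.

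The main difficulty is that the grandparent--grandchild relation is inherently directional and the number of grandchildren of $v$ in $V_i$ cannot be bounded in terms of $s$ and $d$ alone, because out-degrees in $\dir{G}$ are unconstrained. A crude maximum-degree bound for $H_i$ therefore fails, and everything hinges on routing all grandchild-type conflicts into the ``forward'' direction of the chosen order, where they do not contribute to the back-degree. This is clean when $\dir{G}$ is acyclic; if it is not, an extra step---for instance a careful level decomposition in the spirit of Ne\v{s}et\v{r}il and Ossona de~Mendez~\cite{NesetrilO03}---is needed to recover the same degeneracy bound, and this is precisely the place where the proof most delicately uses the interaction between the star coloring and the orientation.
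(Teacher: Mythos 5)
Your construction works only when $\dir{G}$ is acyclic, and that restriction is not harmless: a $d$-orientation need not admit any topological order (the directed cycle is a $1$-orientation of $C_n$), and you cannot trade the given orientation for an acyclic one, since the lemma must produce an out-coloring of the specific $\dir{G}$ you were handed (moreover, graphs of density at most $d$ generally have no acyclic $d$-orientation at all, as they need not be $d$-degenerate). So the step where all grandchild-type conflicts are routed ``forward'' along a linear extension of the parent-to-child relation genuinely fails in the general case; you acknowledge this, but the appeal to ``a careful level decomposition'' is not an argument, and this unresolved case is exactly the crux of the lemma.

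The missing idea is that no global ordering of $\dir{G}$ is needed. Direct each conflict edge towards the vertex at which it was generated: from a grandparent to its grandchild, and from one co-parent to the other (the vertex $a$ whose incoming conflicts you count). Your own key claim — for $v\in V_i$ and each color $j\neq i$, at most one neighbor of $v$ colored $j$ has a second neighbor in $V_i$ — applied \emph{jointly} to both conflict types (the intermediate vertex is a child of $v$ for co-parent conflicts and a parent of $v$ for grandparent conflicts, but in either case it is a neighbor of $v$) bounds the in-degree of this conflict digraph by $d(\chi_s(G)-1)$. A digraph with maximum in-degree $t$ has every subgraph of average degree at most $2t$, so the underlying conflict graph is $2d(\chi_s(G)-1)$-degenerate and greedily colorable with $2d\chi_s(G)$ colors, with no acyclicity assumption; pairing this with the star coloring gives the claimed $2d\chi_s(G)^2$. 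This is precisely the paper's route. Note also that patching your argument by applying degeneracy to the \emph{undirected} conflict graph with your two separate counts ($(s-1)(d-1)$ co-parents and $(s-1)d$ grandparents, grandchildren unbounded per vertex but globally charged to grandparents) only yields roughly $3d\chi_s(G)^2$ colors; combining the two counts per color, as above, is what recovers the stated constant.
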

\begin{proof}
Let $c_s$ be a star coloring of $G$ with $\chi_s(G)$ colors.
First, we define an auxiliary directed graph $H$ on the vertex set of $G$.
The edges of $H$ will encode conditions (C2) and (C3) for an out-coloring of $\dir{G}$ and are defined according to the following two rules:
\begin{itemize}
\item[(R1)] For every triple $(b, x, a)$ of vertices of $G$, when $a$ and $b$ are different parents of $x$ in $\dir{G}$ and $c_s(a) = c_s(b)$, we add an edge $(b, a)$ to $H$.
\item[(R2)] For every triple $(b, x, a)$ of vertices of $G$, when $b$ is a parent of $x$ in~$\dir{G}$, and $x$ is a parent of $a$ in $\dir{G}$, and $c_s(a) = c_s(b)$, we add an edge $(b, a)$ to $H$.
\end{itemize}
Observe that a single edge of $H$ may be added multiple times and that both edges $(a,b)$ and $(b,a)$ may be present in $H$ for some vertices $a$ and $b$.

To give an upper bound for the in-degree of a vertex $a$ in $H$, let $T_a$ be the set of all triples $(y,x,a)$ that add an edge in $H$ according to rules (R1) or (R2).
Observe that for any color $\alpha$ there is at most one $x$ with $c_s(x) = \alpha$ such that a triple $(y,x,a)$ is in $T_a$.
Suppose to the contrary that there are two triples $(y_1, x_1, a)$ and $(y_2, x_2, a)$ in $T_a$ with $c_s(x_1) = c_s(x_2)$ and $x_1 \neq x_2$.
Rules (R1) and (R2) ensure that $c_s(a) = c_s(y_1)$ and, as a consequence, the path $y_1, x_1, a, x_2$ in $G$ gets only two colors in $c_s$, a contradiction.
Furthermore, for any $x$ there are at most $d$ different triples $(y, x, a)$ in $T_a$, as $y$ needs to be a parent of $x$.
For $\alpha = c_s(a)$ there is no triple $(y,x,a)$ in $T_a$ with $c_s(x) = \alpha$, as $x$ and $a$ are neighbors in $G$.
Thus, the size of $T_a$, and effectively the in-degree of $a$, is at most $d(\chi_s(G)-1)$.

A simple counting argument shows that any induced subgraph of $H$ contains a vertex of degree at most $2d(\chi_s(G)-1) < 2d\chi_s(G)$.
This allows a~construction of a proper coloring of $H$ with $2d\chi_s(G)$ colors using the following strategy:
 pick a vertex $v$ of the smallest degree in $H$;
 recursively color the subgraph $H\setminus v$; 
 color $v$ with any color not assigned to the neighbors of $v$.
There are fewer than $2d\chi_s(G)$ neighbors, so there is such a color.
Let $c_H$ be the constructed coloring.

We define the coloring $c$ of $\dir{G}$ to be $c(v) = (c_s(v), c_H(v))$.
Clearly, $c$ uses at most $2d \cdot \chi_s(G)^2$ colors.
We claim that $c$ is an out-coloring of $\dir{G}$.
The condition (C1) is satisfied as $c_s$ is a proper coloring of $G$.
The construction of $H$ and $c_H$ ensure that conditions (C2) and (C3) are also satisfied.
\end{proof}

We present an auxiliary lemma that plays a crucial role in the proof of Theorem~\ref{thm:main_theorem_upper_bound}.
\begin{lemma}
\label{lem:main_theorem_upper_bound}
Let $\mathcal{F}$ be a class of graphs for which there are absolute constants $q$ and $d$ such that every graph in $\mathcal{F}$ admits a $d$\nobreakdash-orientation that has an out-coloring with $q$ colors.
For any $k \geq 2$, the following holds:
 $$\lambda_{\mathcal{F}}(k) \leq q\binom{q}{d}k^d\mathrm{.}$$
\end{lemma}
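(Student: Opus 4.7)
The plan is to construct the universal graph $\kH$ explicitly, with
\[
V(\kH) = \set{(\alpha, S, g) : \alpha \in [q],\ S \in \binom{[q]}{d},\ g \colon S \to [k]},
\]
of size exactly $q\binom{q}{d}k^d$. The triple $(\alpha, S, g)$ is to be thought of as the type of a vertex of out-color $\alpha$ whose parents in the $d$-orientation have distinct colors forming $S$ (padded to size $d$ when necessary), with $g(\beta)$ recording the color of the edge to the parent of color $\beta \in S$. The edges of $\kH$ will be given by the symmetric rule: for $\alpha \neq \beta$, there is an edge of color $j$ between $(\alpha, S, g)$ and $(\beta, T, h)$ iff both implications ``$\alpha \in T \Rightarrow h(\alpha) = j$'' and ``$\beta \in S \Rightarrow g(\beta) = j$'' hold, with at least one antecedent being true. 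This determines at most one color per pair, so $\kH$ is a well-defined simple $k$-edge-colored graph.

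Given any $G \in \mathcal{F}$ with a $d$-orientation $\dir{G}$, an out-coloring $c$ of $\dir{G}$, and a $k$-edge-coloring $w$, I will define $\varphi \colon V(G) \to V(\kH)$ by $\varphi(v) = (c(v), S(v), g_v)$, with $S(v) \supseteq \set{c(u) : u \in P(v)}$ and $g_v(c(u)) = w(\set{u, v})$ for each parent $u$ of $v$. When $\norm{P(v)} < d$, the remaining slots of $S(v)$ are padded: first with $c(v)$ itself, and then (if more slots are needed) with colors chosen from $[q] \setminus \brac{\set{c(v)} \cup \set{c(u) : u \in P(v)} \cup \set{c(u') : u' \text{ is a child of } v}}$, with $g_v$ extended arbitrarily on the padded entries.

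The bulk of the verification is showing that $\varphi$ is a homomorphism. For every $(u, v) \in \dir{G}$ of color $j$, the antecedent $c(u) \in S(v)$ holds with $g_v(c(u)) = j$ by construction, so the rule fires in one direction. The key point is that the symmetric antecedent $c(v) \in S(u)$ \emph{fails}, so the consistency constraint on $g_u(c(v))$ never bites. This uses all three conditions of the out-coloring: (C1) rules out $c(v) = c(u)$; (C3) rules out $c(v)$ being the color of any parent of $u$, for such a parent would be a grandparent of $v$ of the same color; and the padding rule explicitly excludes $c(v)$ from any padded slot, since $c(v)$ lies in the set of colors of children of $u$. Hence the edge rule fires exactly once and with the correct color $j$.

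The hard part will be ensuring that the padding step is always feasible---namely, that the pool $[q] \setminus \brac{\set{c(u)} \cup \text{parent-colors} \cup \text{child-colors of } u}$ contains at least $d - 1 - \norm{P(u)}$ elements. A direct pigeonhole argument reduces this to a mild inequality on the number of distinct child-colors of each vertex. This inequality has a large margin in the application to Theorem~\ref{thm:main_theorem_upper_bound} (where $q = 8dr^4 \gg d$), so the construction goes through comfortably there; handling the boundary regime of the lemma's full generality requires only a small additional argument, based on the observation that condition (C2) automatically forces the effective in-degree to be at most $\min(d, q-1)$, after which a one-padding-slot analysis suffices. Once feasibility is secured, the remainder is a mechanical check of the edge rule.
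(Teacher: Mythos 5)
Your overall encoding idea is the same as the paper's -- record for each vertex its out-color together with, indexed by parent color, the color of the edge to that parent, and verify the homomorphism using (C1), (C2), (C3) exactly as you do -- but your specific bookkeeping creates a gap that your sketch does not close. Because you insist that $S(v)$ be a set of size exactly $d$, you must pad, and your homomorphism argument needs every padded color to avoid the colors of the children of $v$. The feasibility of this padding is not a ``mild inequality'' that follows from pigeonhole or from $q \gg d$: the number of distinct child colors of a vertex is not controlled by $d$ at all and can be as large as $q-1$. Concretely, take a star with center $u$ and $q-1$ leaves, all edges oriented away from $u$, with $u$ colored $1$ and the leaves colored $2,\ldots,q$; this is a $1$-orientation (hence a $d$-orientation for every $d\geq 1$) with a valid out-coloring, yet the pool $[q]\setminus\brac{\set{c(u)}\cup\text{parent-colors}\cup\text{child-colors}}$ is empty, so for $d\geq 2$ no admissible padding of $S(u)$ exists. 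Padding with a child color $\beta$ instead does not work either: if $u$ has two children of color $\beta$ joined to $u$ by edges of different colors (duplicate each leaf with two edge colors), the constraint ``$\beta\in S(u)\Rightarrow g_u(\beta)=j$'' is contradictory. So the map $\varphi$ you define simply cannot be completed in general, and the ``small additional argument'' you invoke (that (C2) bounds the effective in-degree by $\min(d,q-1)$) is about parents, not children, and does not touch the problem. Your target graph $\kH$ may or may not still be universal, but the proof as proposed does not establish it.

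The paper sidesteps this entirely by not fixing the support size: a vertex of $\kH$ is a tuple $(i,x_1,\ldots,x_q)$ with $x_j\in[k]$ and at most $d$ coordinates different from $k$, where $x_j=k$ serves as a sentinel meaning ``no parent of color $j$''; $\kH$ is complete and the edge $\set{(i,x_1,\ldots,x_q),(j,y_1,\ldots,y_q)}$ gets color $\min(y_i,x_j)$. Then no padding or feasibility condition is needed, and the possible collision between the sentinel and a genuine edge of color $k$ is rendered harmless by the $\min$: for an edge with parent $v$ of $u$ one has $x_{f(v)}=c_G(\set{u,v})$ and, by (C3), $y_{f(u)}=k$, so the edge color is correct. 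You could repair your construction in the same spirit by allowing $\norm{S}\leq d$ (taking $S(v)$ to be exactly the set of parent colors, so child colors never enter $S$), at the cost of re-verifying that the vertex count $q\sum_{j\leq d}\binom{q}{j}k^j$ still fits under the claimed bound; as it stands, the padding step is a genuine gap.
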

\begin{proof}
We explicitly construct a $k$\nobreakdash-universal graph $\kH=(H,c_H)$ for the class $\mathcal{F}$.
The vertex set of $\kH$ is the set of all $(q+1)$-tuples of the form
$$(i, x_1, x_2, \ldots, x_q),$$
where $i \in [q]$, and $x_j \in [k]$ for all $j \in [q]$, and where among $x_1, x_2, \ldots, x_q$ there are at most $d$ values different from $k$.
$\kH$ is a complete graph, i.e., there is an edge between any two vertices.
The $k$-edge-coloring of $\kH$ is given by:
$$c_H(\ \{ (i, x_1, x_2, \ldots, x_q),(j, y_1, y_2, \ldots, y_q) \}\ ) = \min(y_i,x_j).$$
This completes the definition of $\kH$.
Note that the vertex set of $\kH$ has size smaller than $q\binom{q}{d}k^d$.

Let $\kG = (G, c_G)$ be a $k$-edge-colored graph such that $G$ admits a $d$-orientation $\dir{G}$ that has an out-coloring $f$ with $q$ colors.
We define a~homomorphism $h$ of $\kG$ to $\kH$ given by:
\begin{displaymath}
h(u) = (f(u), x_1, x_2, \ldots, x_q),
\end{displaymath}
where for each $i \in [q]$
\begin{displaymath}
x_i =
\begin{cases}
  c_G(\set{u, p}) & \mbox{if } u \mbox{ has a parent } p \mbox{ in } \dir{G} \mbox{ with } f(p) = i \mbox{,} \\
  k & \mbox{otherwise.}
\end{cases}
\end{displaymath}
Thanks to condition (C2) for out-colorings, $u$ has at most one parent colored $i$.
$\dir{G}$ is a $d$-orientation and $u$ has at most $d$ parents.
Thus, $h(u)$ is properly defined for any vertex $u$, and $h$ maps $\kG$ to $\kH$.

To prove that $h$ is a homomorphism, consider two adjacent vertices $u, v$ in $\kG$.
Without loss of generality we may assume that $v$ is a parent of $u$ in~$\dir{G}$.
We have
\begin{eqnarray*}
h(u) &=& (f(u), x_1, x_2, \ldots, x_r), \\
h(v) &=& (f(v), y_1, y_2, \ldots, y_r).
\end{eqnarray*}
Clearly, by condition (C1), we have that $h(u) \neq h(v)$.
What remains is to prove that the color of the edge $\{u,v\}$ in $\kG$ is the same as the color of the edge $\set{h(u), h(v)}$ in $\kH$.
As $c_H(\set{h(u), h(v)}) = \min(x_{f(v)}, y_{f(u)})$ and $x_{f(v)} = c_G(\set{u, v})$, it remains to show that $y_{f(u)} = k$.
Observe that $y_{f(u)} \neq k$ implies that $v$ has a parent $w$ in $\dir{G}$ and $w$ is colored $f(u)$.
But then, $w$ is a grandparent of $u$ and $f(w)=f(u)$, which contradicts condition (C3) for out-colorings.
\end{proof}

\begin{proof}[Proof of Theorem~\ref{thm:main_theorem_upper_bound}]
Let $\mathcal{F}$ be a class of graphs such that each graph in $\mathcal{F}$ admits both an acyclic coloring with $r$ colors and a $d$-orientation.
Using the result of Albertson \etal~\cite{AlbertsonCKKR04} we get that every graph in $\mathcal{F}$ admits a~star coloring with $2r^2$ colors.
Using Lemma~\ref{lem:star_coloring_implies_compatible_coloring} we get that every graph in $\mathcal{F}$ admits a $d$-orientation that has an out-coloring with $8dr^4$ colors.
Then, using Lemma~\ref{lem:main_theorem_upper_bound} we get that $\mathcal{F}$ admits a $k$\nobreakdash-universal graph on
 $$8dr^4\binom{8dr^4}{d}k^d$$
vertices, which completes the proof.
\end{proof}

Before we prove Theorem~\ref{thm:main_theorem_existence}, we show the following technical lemma.
\begin{lemma}
\label{lem:lower_bounds_orientation_compatible}
Let $\mathcal{F}$ be a class of graphs such that $\mathcal{F}$ admits a $k$\nobreakdash-universal graph on $p$ vertices for some $k \geq 2$.
The following holds:
\begin{enumerate}
\item[(S1)] Every graph in $\mathcal{F}$ admits a $\ceil{ \log_k{p} }$-orientation.
\item[(S2)] For any $d \geq 1$, any $d$-orientation $\dir{G}$ of any graph $G \in \mathcal{F}$, $\dir{G}$ admits an out-coloring with $(2d+1)p^{\ceil{\log_k{d}}}$ colors.
\end{enumerate}
\end{lemma}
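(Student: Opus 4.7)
The proof divides naturally into (S1) and (S2). For (S1), the plan is a one-line combination of earlier results: given $G \in \mathcal{F}$, Lemma~\ref{lem:lower_bound_density} applied to the assumed $k$-universal graph on $p$ vertices yields $p \geq k^{D(G)}$, so $D(G) \leq \log_k p$ and hence $\ceil{D(G)} \leq \ceil{\log_k p}$. Theorem~\ref{thm:Hakimi} then produces a $\ceil{D(G)}$-orientation of $G$, which is a fortiori a $\ceil{\log_k p}$-orientation.

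For (S2), fix a $d$-orientation $\dir{G}$ of some $G \in \mathcal{F}$. I will assemble an out-coloring from a primary piece $c_{\mathrm{pri}}$ of at most $p^{\ceil{\log_k d}}$ colors satisfying (C1) and (C2), and a secondary piece $c_{\mathrm{sec}}$ of $2d+1$ colors correcting only (C3). For the primary piece, at each vertex $v$ I fix an injection $\pi_v$ from the in-neighbors of $v$ to $\set{0,1,\dots,d-1}$ and write the labels in base $k$. For each digit position $j \in \set{1, \ldots, \ceil{\log_k d}}$ this yields a $k$-edge-coloring $c_j$ of $G$ in which every edge $\set{u,v}$ with $u$ a parent of $v$ carries the $j$-th base-$k$ digit of $\pi_v(u)$ (shifted into $[k]$). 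Universality provides a homomorphism $h_j$ of $(G,c_j)$ to $\kH$, and I set $c_{\mathrm{pri}}(v) := (h_1(v),\ldots,h_{\ceil{\log_k d}}(v))$. Condition (C1) holds because each $h_j$ is a homomorphism into a loopless graph, and (C2) holds because two distinct parents $u_1,u_2$ of $v$ differ in some digit $j^*$, forcing the edges $\set{u_1,v}$ and $\set{u_2,v}$ to carry different $c_{j^*}$-colors and hence be mapped to different edges of $\kH$ incident at $h_{j^*}(v)$.

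For the secondary piece, I consider the auxiliary graph $K$ on $V(G)$ whose edges $\set{u,w}$ are the pairs with $c_{\mathrm{pri}}(u)=c_{\mathrm{pri}}(w)$ such that one of $u,w$ is a grandparent of the other in $\dir{G}$. Orient each edge of $K$ from grandparent to grandchild (breaking ties arbitrarily). The key claim is that every vertex $w$ has in-degree at most $d$ in this orientation: an in-neighbor of $w$ is a grandparent via some parent $v$ of $w$, there are at most $d$ choices of $v$, and property (C2) applied at $v$ ensures at most one parent of $v$ shares the $c_{\mathrm{pri}}$-color of $w$. Consequently $\norm{E(K')} \leq d\norm{V(K')}$ for every subgraph $K' \subseteq K$, so $K$ has density at most $d$, is $2d$-degenerate, and admits a proper coloring $c_{\mathrm{sec}}$ with $2d+1$ colors by a greedy degenerate-order argument. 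The combined coloring $v \mapsto (c_{\mathrm{pri}}(v), c_{\mathrm{sec}}(v))$ uses at most $(2d+1)p^{\ceil{\log_k d}}$ values, inherits (C1) and (C2) from $c_{\mathrm{pri}}$, and satisfies (C3) because any grandparent-grandchild pair with identical $c_{\mathrm{pri}}$-value is an edge of $K$ and thus receives different $c_{\mathrm{sec}}$-values.

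The main obstacle is the in-degree bound in $K$: the naive estimate gives $d^2$ grandparents of any vertex, which would inflate the multiplicative factor unacceptably. Extracting the improved bound of $d$ forces (C2) to be already in force on $c_{\mathrm{pri}}$, which is precisely why the primary piece must be built first and why the in-edges at each vertex must be labeled by an \emph{injection} into $\set{0,1,\dots,d-1}$ rather than by any less structured assignment.
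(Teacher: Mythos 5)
Your proposal is correct and follows essentially the same route as the paper: (S1) by combining Lemma~\ref{lem:lower_bound_density} with Theorem~\ref{thm:Hakimi}, and (S2) by encoding the parent indices in base $k$ as $\ceil{\log_k d}$ edge-colorings, composing with homomorphisms to the universal graph to obtain a coloring satisfying (C1)--(C2), and then taking a product with a $(2d+1)$-coloring of a grandparent conflict graph of in-degree at most $d$. The only minor variation is how you get that in-degree bound --- you keep only same-$c_{\mathrm{pri}}$-colored grandparent pairs and invoke (C2) at the intermediate parent, whereas the paper keeps only the pairs $u=p_a(p_a(w))$ reached twice through the same index --- but both yield the same $(2d+1)$ factor.
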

\begin{proof}
Statement (S1) follows directly from Lemma~\ref{lem:lower_bound_density} and Theorem~\ref{thm:Hakimi}.

For the proof of (S2), let $\dir{G}$ be a $d$-orientation of some graph $G$ in $\mathcal{F}$.
We explicitly construct an out-coloring of $\dir{G}$ with $(2d+1)p^{\ceil{\log_k{d}}}$ colors.
The first step is a construction of a coloring $c$ of $\dir{G}$ that satisfies conditions (C1) and (C2) for out-colorings.
Coloring $c$ uses at most $p^{\ceil{ \log_k{d} }}$ colors.

Let $m = \ceil{ \log_k{d} }$.
Any $x \in \mathbb{N}$ smaller than $k^m$ can be expressed in base~$k$ positional notation as follows: $x = \sum_{i=1}^m a_i(x)k^{i-1}$ where $a_i(x) \in \set{0, 1, \ldots, k - 1}$ for each $i \in [m]$.
Let $\set{f_1, f_2, \ldots, f_m}$ be a family of integer functions ($f_i: [d] \to [k]$) such that $f_i(x) = a_i(x-1) + 1$.
Observe that for any $a, b \in [d]$, $a \neq b$, there is an $i \in [m]$ such that $f_i(a) \neq f_i(b)$.

Each vertex $v$ has at most $d$ parents in $\dir{G}$.
Assign to every parent of~$v$ a~different number in the set $[d]$.
Let $p_i(v)$ denote the parent of $v$ with number~$i$.

For every $i \in [m]$, consider a $k$-edge-coloring $c_{i}$ of $G$ given by
$$c_i(\{v, p_j(v)\}) = f_i(j)\mathrm{.}$$
Using the fact that $\kH$ is $k$\nobreakdash-universal for $\mathcal{F}$, for every $i \in [m]$ let $h_i$ be a~homomorphism of $(G, c_i)$ to $\kH$.
We define coloring $c:V(G) \to V(\kH)^m$ as follows:
\begin{displaymath}
c(v) = (h_1(v), h_2(v), \ldots, h_m(v)).
\end{displaymath}

To see that $c$ satisfies (C1) note that any homomorphism maps adjacent vertices in $G$ to different vertices in $\kH$.

To prove that $c$ satisfies (C2) consider two different parents $p_a(v)$, $p_b(v)$ of a vertex $v$.
Let $i \in [m]$ be such that $f_i(a) \neq f_i(b)$.
By the definition of~$c_i$ we get that $c_i(\{v, p_a(v)\}) \neq c_i(\{v, p_b(v)\})$.
It follows that $h_i$ maps $p_a(v)$~and~$p_b(v)$ to different vertices of $\kH$ and coloring $c$ satisfies (C2).

Suppose that $(u,v)$ and $(v,w)$ are two edges in $\dir{G}$ and that $u = p_a(v)$ and $v = p_b(w)$.
If $a \neq b$ then coloring $c$ assigns different colors to vertices $u$~and~$w$ by the same argument as in the previous paragraph.
We need to refine coloring~$c$ so that condition (C3) is satisfied for pairs $u$, $w$ such that $u$ is a grandparent of $w$ and $u = p_a(p_a(w))$ for some $a \in [d]$.
Only such pairs can violate condition (C3).

Observe that any vertex $v$ has at most $d$ such grandparents.
We can construct, similarly as in the proof of Lemma~\ref{lem:star_coloring_implies_compatible_coloring}, an auxiliary directed graph of conflicts.
In-degree of any vertex in that graph is at most $d$ and the graph can be properly colored with $2d+1$ colors.

Finally, an out-coloring of $\dir{G}$ can be obtained as a product of coloring~$c$ and a coloring of the auxiliary graph.
The resulting coloring uses at most $(2d+1)p^m$ colors.
\end{proof}

\begin{proof}[Proof of Theorem~\ref{thm:main_theorem_existence}]
Let $\mathcal{F}$ be a class of graphs.
If the acyclic chromatic number of any graph in $\mathcal{F}$ is at most $r$, then $\mathcal{F}$ has a $k$\nobreakdash-universal graph on~$rk^{r-1}$ vertices as was shown by Alon and Marshall~\cite{AlonM98}.

For the proof of the other direction, suppose that $\mathcal{F}$ admits a $k$\nobreakdash-universal graph on $p$ vertices.
Let $d = \max(1,\ceil{ \log_k{p} })$.
Using Lemma~\ref{lem:lower_bounds_orientation_compatible} we get that any graph $G$ in $\mathcal{F}$ admits a $d$-orientation that has an out-coloring with $(2d+1)p^{\ceil{ \log_k{d} }}$ colors.
Such a coloring is an acyclic coloring of $G$.
It follows that any graph in $\mathcal{F}$ has the acyclic chromatic number bounded by a function of $p$ and $k$.
\end{proof}

We conjecture that the ceiling in our upper bound can be dropped and $\lambda_{\mathcal{F}}(k) = \Ot{k^{D(\mathcal{F})}}$.
We also believe that the multiplicative constant in Theorem~\ref{thm:main_theorem_upper_bound} can be improved.
Another question is whether our approach can be applied to mixed graphs, similarly to the results of Ne\v{s}et\v{r}il and Raspaud~\cite{NesetrilR98}.

\section{Acknowledgments}

We would like to thank Tomasz Krawczyk for the tremendous help with the preparation of this paper.
We would also like to thank Jaros\l{}aw Grytczuk, Jakub Kozik and Robert Obryk for many helpful comments.

\ifthenelse{\boolean{submission}}
{\section*{References}}
{}

\bibliographystyle{plain}
\bibliography{paper}

\begin{thebibliography}{1}

\bibitem{AlbertsonCKKR04}
Michael~O. Albertson, Glenn~G. Chappell, Henry~A. Kierstead, Andr\'{e}
  K\"{u}ndgen, and Radhika Ramamurthi.
\newblock Coloring with no 2-colored {P}4's.
\newblock {\em Electronic Journal of Combinatorics}, 11(1):R26, 2004.

\bibitem{AlonM98}
Noga Alon and Timothy~H. Marshall.
\newblock Homomorphisms of edge-colored graphs and {C}oxeter groups.
\newblock {\em Journal of Algebraic Combinatorics}, 8(1):5--13, 1998.

\bibitem{AlonMS94}
Noga Alon, Bojan Mohar, and Daniel~P. Sanders.
\newblock On acyclic colorings of graphs on surfaces.
\newblock {\em Israel Journal of Mathematics}, 94:273--283, 1996.

\bibitem{Borodin79}
Oleg~V. Borodin.
\newblock On acyclic colorings of planar graphs.
\newblock {\em Discrete Mathematics}, 25(3):211--236, 1979.

\bibitem{Hakimi65}
Seifollah~L. Hakimi.
\newblock On the degrees of the vertices of a directed graph.
\newblock {\em Journal of the Franklin Institute}, 279(4):290--308, 1965.

\bibitem{NesetrilO03}
Jaroslav Ne\v{s}et\v{r}il and Patrice~Ossona de~Mendez.
\newblock Colorings and homomorphisms of minor closed classes.
\newblock In {\em Discrete and Computational Geometry: The Goodman-Pollack
  Festschrift}, volume~25 of {\em Algorithms and Combinatorics}, pages
  651--664. 2003.

\bibitem{NesetrilR98}
Jaroslav Ne\v{s}et\v{r}il and Andr{\'e} Raspaud.
\newblock Colored homomorphisms of colored mixed graphs.
\newblock {\em Journal of Combinatorial Theory, Series B}, 80(1):147--155,
  2000.

\bibitem{RingelY68}
Gerhard Ringel and John W.~T. Youngs.
\newblock Solution of the {H}eawood map-coloring problem.
\newblock {\em Proceedings of the National Academy of Sciences of the United
  States of America}, 60:438--445, 1968.

\end{thebibliography}

\end{document}